\title{On 6-canonical map of irregular threefolds of general type}
\author{Jungkai Chen, Meng Chen, and Zhi Jiang}
\address{\rm National Center for Theoretical Sciences, Taipei Office, and
Department of Mathematics, National Taiwan University, Taipei,
106, Taiwan} \email{jkchen@math.ntu.edu.tw}
\address{\rm Institute of Mathematics \& LMNS, Fudan University, Shanghai 200433, People's Republic of
China} \email{mchen@fudan.edu.cn}
\address{Math\'ematiques B\^{a}timent 425\\
Universit\'{e} Paris-Sud\\
F-91405 Orsay, France} \email{zhi.jiang@math.u-psud.fr}
\thanks{The first author was partially supported by NCTS/TPE and
National Science Council of Taiwan. The second author was
supported by National Natural Science Foundation of China (\#11171068), Doctoral Fund of Ministry of Education of China (\#20110071110003)and partially by NSFC for Innovative Research Groups (\#11121101)}
\newcommand\Pic{\text{\rm Pic}}
\newcommand\ot{{\otimes}}
\newcommand\OO{{\mathcal{O}}}
\newcommand\cE{{\mathcal{E}}}
\newcommand\cF{{\mathcal{F}}}
\newcommand\cL{{\mathcal{L}}}
\newcommand\cM{{\mathcal{M}}}
\newcommand\cO{{\mathcal{O}}}
\newcommand\bP{{\mathbb P}}
\newcommand\bQ{{\mathbb Q}}
\newtheorem{thm}{Theorem}[section]
\newtheorem{lem}[thm]{Lemma}
\newtheorem{cor}[thm]{Corollary}
\theoremstyle{definition}
\newtheorem{defn}[thm]{Definition}
\newtheorem{exmp}[thm]{Example}
\theoremstyle{remark}
\begin{document}
\begin{abstract} We prove that, for any nonsingular projective irregular 3-fold of general type, the 6-canonical map is birational onto its image.
\end{abstract}
\maketitle
\pagestyle{myheadings} \markboth{\hfill J. Chen, M. Chen, and Z. Jiang
\hfill}{\hfill On 6K of irregular 3-folds\hfill}

\section{\bf Introduction}
Given a nonsingular projective variety $V$ of general type, by definition, the pluricanonical map $\varphi_m$ is birational for all sufficiently large integer $m$. It is natural and interesting to find an effective bound for $m$. By the result of Hacon-McKernan \cite{H-M}, Takayama \cite{Ta} and Tsuji (cf. \cite{Tsu}), one
knows that there exists a positive integer $r_n$ depending only on $n=\dim(V)$ such that $\varphi_m$ is birational for all $m \ge r_n$.
In the case of threefolds, the previous work of the first two authors (cf. \cite{CC1, CC2}) shows that $r_3\leq 73$.

In this note we  study irregular threefolds (i.e. $q(V)>0$) of general type. Recent developments on the technique inspired by the Fourier-Mukai
transform show that the geometry of irregular threefolds is very similar to that of general fibers of the Albanese map. Noting that the 5-canonical map of a general type surface is birational, one may expect that $\varphi_5$ is birational too for those threefolds which admit a fibration over  (a subvariety of) an abelian variety. Indeed,
given a nonsingular projective irregular  threefold  of general type, it has been proved by Chen and Hacon \cite[Theorem 2.8, Proposition 2.9]{JC-H-mz} that $\varphi_{m}$ is birational for all
$m\geq 7$ and, moreover,  that $\varphi_5$ is birational if $\chi(\omega_X)>0$.

The aim of this paper is to prove the following:
\begin{thm}\label{6K} Let $V$ be a nonsingular projective irregular 3-fold of general type. Then $\varphi_6$ is birational.
\end{thm}

\section{\bf  Proof of the main theorem}

\subsection{Reductions}\label{reduc} In order to prove Theorem \ref{6K}, we have the following reduction to special cases:
\begin{itemize}
\item[(1)] Let $V$ be a nonsingular projective 3-fold of general type. Take any birational projective model $W$ of $V$ so that $W$ has at worst canonical singularities. Then $V$ and $W$ share the same birational invariants and $\Phi_{mK_W}\approx\Phi_{mK_V}$. Therefore it is sufficient to prove the statement of Theorem \ref{6K} just replacing $V$ with any suitable birational model $W$.

\item[(2)] By Chen and Hacon \cite[Proposition 2.9]{JC-H-mz}, one only needs to consider the following situation (since, otherwise, $|6K|$ gives a birational map):
\begin{quote}
$(\natural)$ The Albanese map of $V$ induces the fibration $a_V
:V\longrightarrow C$ onto an elliptic curve $C$, of which the
general fiber is a $(1,2)$ surface $S$, i.e.
$(K_{S_0}^2,p_g(S))=(1,2)$, where $S_0$ is assumed to be the
minimal model of $S$.
\end{quote}

\item[(3)] Also due to Chen and Hacon \cite[Theorem 1.1]{JC-H-mz},
we may assume that $\chi(\OO_V)\geq 0$ (since, otherwise, $|5K|$
gives a birational map).

\item[(4)]  
By running
the minimal model program, one gets a relative minimal model $X\to
C$ of $a_V$ where $X$ has $\bQ$-factorial terminal singularities.
Then $K_{X/C}$ is nef (see, for instance, Ohno \cite[Theorem
1.4]{Ohno}), which means that $X$ is minimal since $K_C$ is trivial. In
the proof of Theorem \ref{6K}, we may and do replace $V$ by a
minimal model $X$ (i.e. $K_X$ nef) which has at worst
$\bQ$-factorial terminal singularities.
\end{itemize}

\begin{cor} Suppose $V$ (or $X$) satisfies \ref{reduc}(2) and \ref{reduc}(3). Then $q(X)=1$, $p_g(X)=h^2(\OO_X)\leq 2$ and thus $\chi(\OO_X)=0$.
\end{cor}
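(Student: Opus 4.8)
The plan is to read off every invariant of $X$ from the Albanese fibration $a\colon X\to C$ and its higher direct images $R^{i}a_{*}\OX$, beginning with $q(X)=1$. By \ref{reduc}(2) the Albanese map of $X$ factors through the fibration $a\colon X\to C$ onto the elliptic curve $C$, so the Albanese image is all of $C$. Since the image of the Albanese map generates $\Alb(X)$ as a group and $C$ is already an abelian variety of dimension one, this forces $\Alb(X)=C$, whence $q(X)=h^{1}(\OO_{X})=1$.

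Next I would compute the sheaves $R^{i}a_{*}\OX$ on $C$. The general fibre of $a$ is the minimal $(1,2)$-surface $S_{0}$, which is regular: $q(S_{0})=0$ and $p_{g}(S_{0})=2$ (indeed $K_{S_{0}}^{2}=1<2p_{g}(S_{0})$ rules out irregularity by Debarre's inequality $K^{2}\ge 2p_{g}$ for irregular minimal surfaces of general type). Because $\omega_{C}\cong\OO_{C}$ we have $\OX\cong\omega_{X/C}$, so $a_{*}\OX=a_{*}\omega_{X/C}=:\cE$ is a nef vector bundle on $C$ by Fujita, of rank $h^{0}(S_{0},\omega_{S_{0}})=p_{g}(S_{0})=2$. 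The sheaf $R^{1}a_{*}\OX$ is torsion-free by Koll\'ar and has generic rank $h^{1}(S_{0},\omega_{S_{0}})=q(S_{0})=0$, hence $R^{1}a_{*}\OX=0$; and relative duality gives $R^{2}a_{*}\OX\cong(a_{*}\OO_{X})^{\vee}=\OO_{C}$. Since $X$ has rational singularities these identities may be verified on a smooth model, where Koll\'ar's theorems apply directly.

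Now I would compute Euler characteristics. On the elliptic curve $C$ one has $\chi(C,\cF)=\deg\cF$ for every coherent sheaf $\cF$, so the Leray spectral sequence yields $\chi(\OX)=\sum_{i}(-1)^{i}\chi(C,R^{i}a_{*}\OX)=\deg\cE-0+\deg\OO_{C}=\deg\cE$. As $\cE$ is nef this gives $\chi(\OO_{X})=-\chi(\OX)=-\deg\cE\le0$, while \ref{reduc}(3) gives $\chi(\OO_{X})\ge0$; hence $\chi(\OO_{X})=0$ and $\deg\cE=0$. Substituting $\chi(\OO_{X})=0$ and $q(X)=1$ into $\chi(\OO_{X})=1-q(X)+h^{2}(\OO_{X})-p_{g}(X)$ yields $h^{2}(\OO_{X})=p_{g}(X)$.

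Finally I would bound $p_{g}$: the bundle $\cE$ is nef of degree $0$ and rank $2$ on the elliptic curve $C$, hence semistable of slope $0$, so $p_{g}(X)=h^{0}(X,\OX)=h^{0}(C,\cE)\le\rk\cE=2$, giving $p_{g}(X)=h^{2}(\OO_{X})\le2$. The crux of the argument is the vanishing $R^{1}a_{*}\OX=0$: it combines Koll\'ar's torsion-freeness with the regularity of the $(1,2)$-surface, and it is precisely this vanishing that removes the only potentially negative contribution to $\chi(\OX)$ and lets the nefness of $\cE$ pin $\chi(\OO_{X})$ to $0$.
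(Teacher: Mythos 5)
Your proof is correct and follows essentially the same route as the paper's: Koll\'ar-type vanishing $R^1a_*\omega_X=0$ (from $q(S)=0$) together with Fujita's semipositivity of $a_*\omega_{X/C}$ gives $\chi(\OO_X)=-\deg(a_*\omega_{X/C})\le 0$, which combined with \ref{reduc}(3) forces $\chi(\OO_X)=0$. The only (cosmetic) difference is the final bound: you estimate $p_g(X)=h^0(C,a_*\omega_X)\le 2$ via semistability of the nef degree-zero bundle, whereas the paper bounds the Serre-dual quantity $h^2(\OO_X)=h^1(C,a_*\omega_X)\le \rk(a_*\omega_X)=2$ directly from semipositivity and Atiyah's classification.
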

\begin{proof}
 Clearly one has $q(V)=1$. Since $q(S)=0$, we see $h^2(\OO_V)=h^1(a_*\omega_V)$. So one has
$\chi(\OO_V)=h^2(\OO_V)-p_g(V)=h^1(a_*\omega_V)-h^0(a_*\omega_V)=-\deg(a_*\omega_{V/C})\leq 0$ by the semi-positivity theorem of Fujita \cite{F}. Thus $\chi(\OO_V)=0$ and $p_g(V)=h^2(\OO_V)$. Also by the semi-positivity of $a_*\omega_{V}=a_*\omega_{V/C}$, $p_g(V)=h^2(\OO_V)=h^1(a_*\omega_V)\leq \text{rk}(a_*\omega_V)=2$. By Reid's R-R formula in \cite{YPG}, one can see $P_2(V)>0$ and $P_{m+1}(V)>P_{m}(V)$ for all $m\geq 2$.
\end{proof}

\subsection{Definitions and lemmas}\label{per}


Before proving the main result, we would like to recall
some notion and results in Chen and Hacon \cite{JC-H-mz}.

\begin{defn}
For any vector bundle $E$ on an elliptic curve, we write $E =
\oplus E_i$, where each $E_i$ is indecomposable. We define
$\nu(E):=\min \{  \mu(E_i) \}$, where
$\mu(E_i)=\frac{\deg(E_i)}{\text{rk}(E_i)}$ is the slope of $E_i$.
\end{defn}

\begin{defn} A coherent sheaf $\mathcal {F}$ on an abelian
variety $A$ is said to be $IT^0$ if $H^i(A,\mathcal
{F}\otimes P)=0$ for all $i>0$ and all $P\in
\text{Pic}^0(A)$.
\end{defn}

\begin{lem}\label{mucomp}(\cite[Lemma 4.8]{JC-H-mz})
Let $E_1$, $E_2$ be vector bundles on an
elliptic curve.
\begin{itemize}
\item[(1)] If $E_1$, $E_2$ are indecomposable and ${\rm
Hom}(E_1,E_2) \neq 0$, then $\mu(E_2) \ge \mu (E_1)$.

 \item[(2)] If there exists a surjective map
$E_1 \to E_2$, then $\nu(E_2) \ge \nu (E_1)$ .
\end{itemize}
\end{lem}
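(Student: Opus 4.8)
The plan is to derive both assertions from the single structural fact that on an elliptic curve $C$ every \emph{indecomposable} vector bundle is semistable (Atiyah). Recall that $E$ semistable means $\mu(F)\le\mu(E)$ for every subsheaf $F\subseteq E$, or equivalently $\mu(Q)\ge\mu(E)$ for every nonzero quotient $E\to Q$ of positive rank. Everything else I need is the elementary slope calculus available on any smooth projective curve: the image of a map of bundles is torsion-free hence locally free; a quotient of positive rank of a semistable bundle has slope at least that of the whole; and saturating a subsheaf keeps the rank but does not decrease the degree, hence does not decrease the slope.

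For (1), let $\phi\colon E_1\to E_2$ be nonzero and set $I={\rm im}(\phi)$, a nonzero locally free sheaf that is simultaneously a quotient of $E_1$ and a subsheaf of $E_2$. Semistability of $E_1$ applied to the quotient $E_1\to I$ gives $\mu(I)\ge\mu(E_1)$. Letting $\bar I\subseteq E_2$ be the saturation of $I$ (same rank, $\deg\bar I\ge\deg I$), semistability of $E_2$ applied to the subsheaf $\bar I$ gives $\mu(I)\le\mu(\bar I)\le\mu(E_2)$. Chaining the two inequalities yields $\mu(E_1)\le\mu(I)\le\mu(E_2)$, which is the assertion; equivalently, if $\mu(E_1)>\mu(E_2)$ then ${\rm Hom}(E_1,E_2)=0$.

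For (2), decompose source and target into indecomposables, $E_1=\bigoplus_i G_i$ and $E_2=\bigoplus_j F_j$, so that $\nu(E_1)=\min_i\mu(G_i)$ and $\nu(E_2)=\min_j\mu(F_j)$. Fix an index $j$ with $\mu(F_j)=\nu(E_2)$ and compose the given surjection $E_1\to E_2$ with the projection $E_2\to F_j$; the result is a surjection $E_1\to F_j$, in particular nonzero, so at least one of its restrictions $G_i\to F_j$ is nonzero. Since $G_i$ and $F_j$ are both indecomposable, part (1) applies and gives $\mu(F_j)\ge\mu(G_i)\ge\nu(E_1)$. Therefore $\nu(E_2)=\mu(F_j)\ge\nu(E_1)$.

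The only genuine obstacle is the semistability of indecomposable bundles; granting it, both parts are formal. If a self-contained proof is preferred to citing Atiyah, I would argue by contradiction using the Harder--Narasimhan filtration: if an indecomposable $E$ were unstable, its maximal destabilizing subsheaf $F$ (which is itself semistable) would sit in $0\to F\to E\to E/F\to 0$ with every Harder--Narasimhan slope of $F$ strictly larger than every such slope of $E/F$. The general statement that there is no nonzero map from a semistable bundle to a semistable bundle of strictly smaller slope, proved by the same image argument as in (1) but using only semistability, then forces ${\rm Hom}(F,E/F)=0$; and Serre duality on $C$, where $K_C\cong\OO_C$, identifies ${\rm Ext}^1(E/F,F)\cong{\rm Hom}(F,E/F)^\vee=0$, so the sequence splits, contradicting indecomposability. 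This use of the triviality of $K_C$ is precisely where the elliptic hypothesis enters.
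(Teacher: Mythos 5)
Your proof is correct, but note that there is nothing in this paper to compare it against: the lemma is stated with a citation to \cite[Lemma 4.8]{JC-H-mz} and is not proved here, so the relevant comparison is with the argument in that reference. There, part (1) is obtained directly from Atiyah's classification: ${\rm Hom}(E_1,E_2)=H^0(E_1^\vee\otimes E_2)$, and for indecomposable $E_1,E_2$ the bundle $E_1^\vee\otimes E_2$ splits into indecomposable summands all of slope $\mu(E_2)-\mu(E_1)$, while an indecomposable bundle of negative degree on an elliptic curve has no sections; part (2) is then deduced by projecting onto summands, essentially as you do. Your route is genuinely different for part (1): you replace the explicit Atiyah computation by the single structural fact that indecomposable bundles on an elliptic curve are semistable, after which (1) becomes the standard ``no nonzero maps that decrease slope'' statement valid on any smooth projective curve, and your part (2) reduction (project onto the minimal-slope summand $F_j$, pick a summand $G_i$ of $E_1$ mapping nontrivially, apply (1)) coincides with the cited one. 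What your approach buys is conceptual clarity and generality: the slope calculus is curve-independent, and your self-contained justification of semistability of indecomposables --- maximal destabilizing subsheaf, vanishing of ${\rm Hom}(F,E/F)$, and Serre duality ${\rm Ext}^1(E/F,F)\cong{\rm Hom}(F,E/F)^\vee$ using $K_C\cong\mathcal{O}_C$ to split the sequence --- is correct and pinpoints exactly where ellipticity is used. What it costs is that you must either quote Atiyah for semistability anyway or carry out that extra Harder--Narasimhan argument, whereas the Chen--Hacon computation extracts both parts from Atiyah's results in one step.
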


\begin{lem}\label{numin} (\cite[Lemma 4.10]{JC-H-mz})
Let $E$ be an $IT^0$ vector bundle on an elliptic curve which admits a short exact sequence $$ 0 \to F \to E \to Q \to 0$$ of coherent
sheaves such that $Q$ has generic rank $=0$ (resp. $\le 1$). Then
 $\nu(E) \ge \nu(F)$ (resp. $\nu(E)\geq \min\{1, \nu(F)\}$).
\end{lem}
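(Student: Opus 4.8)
The plan is to transfer the statement to the dual bundle $E^\vee$ and then read everything off from the slopes of indecomposable summands, invoking only Lemma \ref{mucomp}. Recall Atiyah's theorem that on an elliptic curve every indecomposable bundle is semistable; hence for a bundle $E=\oplus_i E_i$ the quantity $\nu(E)=\min_i\mu(E_i)$ is exactly the minimal Harder--Narasimhan slope $\mu_{\min}(E)$, while the maximal slope satisfies $\mu_{\max}(E^\vee)=-\nu(E)$, since dualizing sends an indecomposable summand of slope $\mu$ to an indecomposable summand of slope $-\mu$. I will also use that $E$ being $IT^0$ forces $\nu(E)>0$: an indecomposable $G$ is $IT^0$ iff $\mu(G)>0$, because by Serre duality $H^1(G\otimes P)=H^0(G^\vee\otimes P^{-1})^\vee$, and the indecomposable bundle $G^\vee\otimes P^{-1}$ of slope $-\mu(G)$ acquires a nonzero section for suitable $P\in\Pic^0$ whenever $\mu(G)\le 0$.

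For the case $\rk Q=0$ (where no $IT^0$ hypothesis is needed), I would argue as follows. Since $F,E$ are locally free and $Q$ is torsion, dualizing $0\to F\to E\to Q\to 0$ produces an inclusion $E^\vee\hookrightarrow F^\vee$ of equal rank with torsion cokernel. Pick an indecomposable summand $E_0$ of $E^\vee$ realizing $\mu(E_0)=\mu_{\max}(E^\vee)=-\nu(E)$. Its composite with the projection $F^\vee\twoheadrightarrow N$ onto at least one indecomposable summand $N$ must be nonzero, so Lemma \ref{mucomp}(1) gives $-\nu(E)=\mu(E_0)\le\mu(N)\le\mu_{\max}(F^\vee)=-\nu(F)$, that is $\nu(E)\ge\nu(F)$.

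For the case of generic rank $\le 1$ I may assume $Q$ has generic rank exactly $1$, the rank-$0$ subcase being covered above together with $\nu(F)\ge\min\{1,\nu(F)\}$. Set $L:=Q/\mathrm{tors}$, a line bundle, and $F':=\ker(E\to L)\supseteq F$, which is locally free as a saturated subsheaf of a bundle on a smooth curve; the torsion of $Q$ sits in $0\to F\to F'\to\mathrm{tors}(Q)\to 0$, so the rank-$0$ case gives $\nu(F')\ge\nu(F)$ and hence $\min\{1,\nu(F')\}\ge\min\{1,\nu(F)\}$. It thus suffices to prove $\nu(E)\ge\min\{1,\nu(F')\}$ for $0\to F'\to E\to L\to 0$. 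Here the $IT^0$ hypothesis enters decisively: since $E\twoheadrightarrow L$ and $\nu(E)>0$, Lemma \ref{mucomp}(2) yields $\deg L=\nu(L)\ge\nu(E)>0$, so $\deg L\ge 1$. Dualizing gives $0\to L^{-1}\to E^\vee\to F'^\vee\to 0$; choosing an indecomposable summand $E_0\subseteq E^\vee$ with $\mu(E_0)=-\nu(E)$, either $E_0$ maps nontrivially to an indecomposable summand $N$ of $F'^\vee$, whence $-\nu(E)\le\mu(N)\le-\nu(F')$ by Lemma \ref{mucomp}(1) and so $\nu(E)\ge\nu(F')$, or $E_0$ lands inside $L^{-1}$, whence $-\nu(E)=\mu(E_0)\le\deg L^{-1}=-\deg L\le-1$ and so $\nu(E)\ge 1$. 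In either case $\nu(E)\ge\min\{1,\nu(F')\}\ge\min\{1,\nu(F)\}$.

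The genuinely substantive case is $\rk Q=1$, and the whole point is the constant $1$ in $\min\{1,\nu(F)\}$: the inclusion $L^{-1}\hookrightarrow E^\vee$ can drop the maximal slope of $E^\vee$ all the way to $-\deg L$, so without control on $\deg L$ the conclusion would fail, and the only thing preventing a worse bound is that $IT^0$-ness forces $\deg L\ge 1$ via Lemma \ref{mucomp}(2). I expect the main points requiring care to be (i) the clean reduction replacing the possibly non-locally-free $Q$ by its line-bundle quotient $L$, absorbing the torsion into the harmless rank-$0$ estimate, and (ii) the verification that $E\twoheadrightarrow L$ really forces $\deg L\ge 1$; once the problem is dualized, the remaining slope comparisons are immediate from Lemma \ref{mucomp}.
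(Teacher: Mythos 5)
Your proof is correct, but there is nothing in the paper to match it against: the paper does not prove Lemma \ref{numin} at all, it simply quotes it as \cite[Lemma 4.10]{JC-H-mz}, so you have supplied a genuine argument where the paper has only a citation. The argument checks out: the dualization bookkeeping is right ($\mu_{\max}(E^\vee)=-\nu(E)$, since dualizing preserves indecomposability and negates slopes); the claim that $IT^0$ forces $\nu(E)>0$ is correct with exactly the Serre-duality/Atiyah justification you give; the torsion case needs no $IT^0$ hypothesis, which is precisely what legitimizes applying it to $0\to F\to F'\to\mathrm{tors}(Q)\to 0$ even though $F'$ need not be $IT^0$; and the dichotomy on where $E_0$ maps is exhaustive, with both branches closing. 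Two minor tightenings: in the branch $E_0\subseteq L^{-1}$ you should justify $\mu(E_0)\le-\deg L$ either by noting that $E_0$ is then forced to be a line bundle, or more simply by applying Lemma \ref{mucomp}(1) to the nonzero map $E_0\to L^{-1}$; and $\deg L\ge 1$ follows even faster from the observation that a quotient of an $IT^0$ sheaf on a curve is again $IT^0$ (since $H^2$ vanishes on a curve), hence of positive degree. For comparison, there is a shorter route on the undualized side (quite possibly the one intended in the cited source) that needs neither dualization nor your saturation step: pick the indecomposable summand $E_i$ of $E$ with $\mu(E_i)=\nu(E)$ and consider the composite $F\to E\to E_i$. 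If it is nonzero, some summand $F_j$ of $F$ maps nontrivially to $E_i$, and Lemma \ref{mucomp}(1) gives $\nu(E)=\mu(E_i)\ge\mu(F_j)\ge\nu(F)$. If it is zero, then $F\subseteq\bigoplus_{k\ne i}E_k$, so $\rk(E_i)\le\rk(E)-\rk(F)$, which is the generic rank of $Q$, hence $\le 1$; thus $E_i$ is a line bundle, and a line-bundle direct summand of an $IT^0$ bundle has degree $\ge 1$, so $\nu(E)\ge 1$. This handles both cases of the lemma uniformly (when $Q$ is torsion the second branch cannot occur). What your route buys in exchange is a clean separation of the two mechanisms, slope comparison versus the constant $1$, into the two branches of your dichotomy; what it costs is the extra reduction $F\subseteq F'$ and the degree bound on $L$.
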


\subsection{Multiplication maps $\varphi_{m,n}$ and
$\psi_{m,n}$.} Consider the fibration $a:X\to C$ as in \ref{per}. Let $F$ be a general fiber $F$ of $a$. Let $R_m:= H^0(F, \omega_F^m)$ and $E_m:=a_*
\omega_X^m$. By Chen and Hacon \cite[Lemma 4.1]{JC-H-mz}, $E_m$ is an $IT^0$ vector bundle of rank $P_m(F)$ for all $m\geq 2$. We also remark that $\nu(E_m) \ge 0$ by the semi-positivity theorem (see, for instance, Viehweg \cite{V1}) and Atiyah's description of
vector bundles over elliptic curves (cf. \cite{At}). We
consider the multiplication map of pluricanonical systems
on the fiber $F$, say
$$\varphi_{m,n}: R_m \ot R_n \to R_{m+n}.$$
This naturally induces a map between vector bundles
$$\psi_{m,n}: E_m \ot E_n \to E_{m+n}$$
where $m,\ n> 0$.
Clearly if cokernel of $\varphi_{m,n}$ has dimension $\leq
r$, then cokernel of $\psi_{m,n}$ has rank $\leq r$.

\subsection{Proof of Theorem \ref{6K}} {}First of all, we recall that the linear system
$|6K_V|$ separates two general points on two distinct general fibers of the Albanese map $a_V$ (see \cite[Theorem 2.8 (2)]{JC-H-mz}). Hence we just need to show that $|6K_V|$
separates two general points on a general fiber of $a_V$ to conclude the proof of Theorem \ref{6K}.

We now take the birational model $a: X\rightarrow C $ of $V$ as in \ref{reduc}(1)$\sim$(4).

\medskip



{\it Step 1}. We construct a relative canonical model $ W \to C$
of $a$.

We may take an integer $m \gg 0$ and pick a  very ample divisor $L$ on $C$ so that
\begin{itemize}
\item[i.] for the general fiber $F$ of $a$, $|mK_F|$ is base point free and $\Phi_{|mK_F|}(F)$ is the canonical model of $F$;

\item[ii.] $|a^*L+mK_X|$ is  free; \item[iii.]
$a_*\omega_X^m\otimes \OO_C(L)$ is generated by global sections
and then the restriction map $H^0(X, a^*L + mK_X) \to H^0(F,
mK_F)$ is surjective for general $F$;

\item[iv.] $a_*\omega_X^2\otimes \OO_C(L)$ is generated by global
sections  and then the restriction map $H^0(X, a^*L + 2K_X) \to
H^0(F, 2K_F)$ is surjective for general $F$.
\end{itemize}
The linear system $|a^*L+mK_X|$ defines a morphism $X \to \bP^N$
over $C$ and let $W$ be its image. Then we get a relative
canonical model $g: W\to C$. Clearly, by definition, $a$ factors
through $g$. Denote by $G$ the general fiber of $g$.  Then $W|_G$
is exactly the canonical model of $F$ for general $F$.
\medskip

{\it Step 2}. The relative bicanonical map $h:Y\to C$ of $g$.

It is known (cf. Catanese \cite[1.3 Example]{Cat}) that the
canonical model $G$ of any (1,2) surface  is a degree 10 weighted
hypersurface, with at worst rational double points, in $
\bP(1,1,2,5)$. Namely, if $x,y,z,u$ are coordinates of
$\bP(1,1,2,5)$, then $G$ is given by the homogeneous equation
$u^2-f_{10}(x,y,z)$ for some homogeneous polynomial
$f_{10}(x,y,z)$ of degree 10 in $x,y,z$. Furthermore the
bicanonical map $\varphi_2$ of $G$ is a double covering onto
$\bP(1,1,2)$ branched along a reduced divisor $B_0=
\text{div}(f_{10}) \subset \bP(1,1,2)$ of degree 10.

By the choice of $m$, we may assume that the rational map
$$\Phi_{|a^*(L)+2K_X|}:X\dashrightarrow Y$$ factors through $W$
where $Y$ is assumed to be the closure of the image.
Notice also that $a: X \to C$ factors through $Y$. Moreover, there
is a natural injection $Y \hookrightarrow \bP(a_*
\omega^2)=\bP(E_2)$, where $\bP(E_2)$ is a $\bP^3$-bundle over
$C$. We have a new fibration $h:Y\to C$ which is induced from the
bicanonical map of $g$.

Let $H$ be the general fiber of $h: Y \to C$. Over a general point of $C$, we have morphisms $F \to G \to H$ where $F$ is a minimal (1,2) surface, $G$ is the degree 10 hypersurface in $\bP(1,1,2,5)$ with RDPs and $H \cong \bP(1,1,2)$. We have seen that both $X \dashrightarrow Y$ and $W \dashrightarrow Y$ are well-defined
over general points of $C$. Replacing both $X$ and $W$ with suitable birational models $\hat{X}$ and $\hat{W}$ by a necessary birational modification to those
indeterminancies, we have the following commutative diagram:
$$\begin{CD}
\hat{X}  @>\sigma>> \hat{W} @>\tau>> Y  @>>> \bP(E_2)\\
@V\hat{a}VV  @V\hat{g}VV @VhVV  @VVpV\\
 C @>=>> C @>=>> C @>=>> C.
\end{CD}$$
where $\hat{X}$ (resp. $\hat{W}$) coincides with $X$ (resp. $W$)
over a Zariski open subset $U$ of $C$ and $\hat{a}$ (resp.
$\hat{g}$) factors through $a$ (resp. $g$).
\medskip

{\it Step 3}. The decomposition of $E_m$ by the double covering
construction.

Shrinking $U$, if necessary, so that $\tau: W_U=\hat{W}_U \to Y_U$ is a double covering branched along an even reduced divisor $B_U \subset Y_U$. Let $B_1$ be the closure of $B_U$ in $Y$. Then
$$\cO_{Y}(B_1)= \cO_{\bP(E_2)}(10) \otimes p^* \cM |_{Y}$$ for some line bundle $\cM$ on $C$. Set $B=B_1$ (resp. $B=B_1+H_0$) if
$\deg(\cM)$ is even (resp. odd), then $\cO_Y(B) = \cL^{\otimes 2}$, where $\cL=(\cO_{\bP(E_2)}(5) \otimes \pi^* \cM') |_Y$ for some $\cM'$.

Let $\mu: \tilde{Y} \to Y$ be the log resolution of $(Y,B)$ and let $\tilde{B}:= \mu^*B - 2 \lfloor \frac{\mu^*B}{2} \rfloor$ and $\tilde{\cL}=\mu^* \cL \otimes \cO(-\lfloor \frac{\mu^*B}{2} \rfloor )$. Clearly $\tilde{B}$ is a reduced {\it SNC} divisor and $\cO( \tilde{B}) = \tilde{\cL}^{\otimes 2}$.  Let  $\tilde{\pi}: \tilde{X} \to \tilde{Y}$ be the double cover over $\tilde{Y}$ branched along $\tilde{B}$. One sees that $\tilde{X}$ has at worst canonical singularities by local consideration.
We thus have
$$ \tilde{\pi}_* \cO_{\tilde{X}}(mK_{\tilde{X}}) = \cO_{\tilde{Y}} ( mK_{\tilde{Y}})\otimes \tilde{\cL}^{m} \oplus  \cO_{\tilde{Y}} ( mK_{\tilde{Y}}) \otimes \tilde{\cL}^{m-1}$$
for all $m>0$. Now if we take a common birational modification to both $\hat{X}$ and $\tilde{X}$ and take push-forwards in two directions respectively, we shall get
the following decomposition
$$ E_m:= E_{m,0} \oplus E_{m,1},$$
where
$$\begin{array}{l} E_m:=a_*\cO_X(mK_{X}); \\ E_{m,0}:= h_* \mu_* (\cO_{\tilde{Y}} ( mK_{\tilde{Y}})\otimes \tilde{\cL}^{m});\\
E_{m,1}:= h_* \mu_*( \cO_{\tilde{Y}} ( mK_{\tilde{Y}}) \otimes \tilde{\cL}^{m-1}).
\end{array}
$$
\medskip

{\it Step 4}. Calculating $\nu(E_{6,i})$.

It is rather easy to check that $\text{rk}(E_{m,0})=h^0(H, \cO(m))$ and
$\text{rk}(E_{m,1})= h^0(H, \cO(m-5))$ for a general fiber $H$ of $h$. Indeed, for $t \in U$,
\begin{eqnarray*}
E_m \otimes k(t) &\cong &H^0( F_t, \cO_{F_t}(mK_X)) \cong
H^0(G_t, \cO_{G_t}(mK_W))\\
& \cong& H^0( \bP(1,1,2,5), \cO(m)),
\end{eqnarray*}
$$E_{m,0} \otimes k(t) \cong H^0( H_t, \cO_{H_t}(mK_Y+mL)) \cong
H^0(\bP(1,1,2), \cO(m))\ \text{and}$$
$$E_{m,1} \otimes k(t) \cong H^0( H_t, \cO_{H_t}(mK_Y+(m-1)L)) \cong
H^0(\bP(1,1,2), \cO(m-5)).$$ It follows that $\psi_{m,n}$ induces
a map $$E_{m,0} \otimes E_{n,0} \to E_{m+n,0}.$$ Since $E_{m,0} =
E_{m}$ for $m \le 4$. One sees that
$$\psi_{4,2}: E_4 \otimes E_2 \cong E_{4,0} \otimes E_{2,0} \to
E_{6,0}$$ is generically surjective. Since $E_2$ is a non-zero
$IT^0$ sheaf, we have $h^0(E_2) \ge 1$. Hence $\nu(E_2) \ge
\frac{1}{4}$. Since $\psi_{2,2}$ is generically surjective, we
have $\nu(E_4)\geq \nu(E_{4,0}) \ge \frac{1}{2}$ by Lemma \ref{numin}. Similarly,
$\nu(E_{6,0}) \ge \frac{3}{4}$. Moreover, $E_{6,1}$ is $IT^0$ of
rank $2$ by Lemma \ref{mucomp}, hence $\nu(E_{6,1}) \ge \frac{1}{2}$.
\medskip

{\it Step 5}. Birationality of $\varphi_6$.

We need the following:

\begin{lem}\label{im} Let $\cF$ be a coherent sheaf on $X$ and $\cE:=a_*\cF$ on $C$. Suppose that $\cE$ is
an $IT^0$ vector bundle. Then for any general fiber $X_t$,  the image of the restriction map $H^0(C, \cE)\cong H^0(X,\cF)
\stackrel{res}{\to} H^0(X_t, \cF|_{X_t})$ has dimension $\ge
rk(\cE) \cdot \min\{\nu(\cE), 1 \}$.
\end{lem}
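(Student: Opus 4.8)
The plan is to reduce the statement to a question about the evaluation of global sections of a vector bundle on the elliptic curve $C$, and then to analyse it summand by summand using Atiyah's classification \cite{At} together with the Harder--Narasimhan filtration. First I would identify the restriction map with fibrewise evaluation: since $H^0(X,\cF)=H^0(C,\cE)$ and, for general $t\in C$, the base change map $\cE\otimes k(t)\to H^0(X_t,\cF|_{X_t})$ is injective, the map in the statement factors as $H^0(C,\cE)\to\cE\otimes k(t)\hookrightarrow H^0(X_t,\cF|_{X_t})$, the first arrow being evaluation at $t$. Hence for general $t$ its image has dimension equal to the generic rank of the subsheaf $\cE'\subseteq\cE$ generated by global sections. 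Writing $\cE=\bigoplus_i E_i$ into indecomposables, both $H^0$ and the evaluation map split, so $\cE'=\bigoplus_i E_i'$ and it suffices to bound each $\rk(E_i')$. Because $\cE$ is $IT^0$, every summand $E_i$ has slope $\mu(E_i)>0$, and an indecomposable bundle on an elliptic curve is semistable. Thus I am reduced to proving: \emph{for an indecomposable (hence semistable) bundle $E$ of slope $\mu>0$ and rank $r$, the subsheaf generated by $H^0(E)$ has generic rank $\ge r\min\{\mu,1\}$}; summing over $i$ and using $\mu(E_i)\ge\nu(\cE)$ then yields the lemma.

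For $\mu>1$ I would show $E$ is in fact globally generated, so the image has full rank $r=r\min\{\mu,1\}$. Indeed, for every $P\in C$ the sequence $0\to E(-P)\to E\to E|_P\to 0$ gives $H^0(E)\to E|_P\to H^1(E(-P))$, and $E(-P)$ is semistable of slope $\mu-1>0$, whence $H^1(E(-P))=0$ and $H^0(E)$ surjects onto $E|_P$.

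The substantive case is $\mu\le 1$, where I expect the main difficulty to lie. Let $\bar E\subseteq E$ be the saturation of $\cE'$; every section of $E$ already lands in $\cE'$, so $H^0(\bar E)=H^0(E)$, and since $\mu>0$ and $E$ is semistable one has $h^0(E)=\deg E=r\mu$. The key point is the elementary but crucial estimate that a bundle on an elliptic curve all of whose Harder--Narasimhan slopes are $\le 1$ satisfies $h^0\le\rk$: on each semistable graded piece $W$ of slope $\le 1$ one checks $h^0(W)\le\rk(W)$ (if $\mu(W)>0$ then $h^1(W)=0$ and $h^0(W)=\deg W\le\rk W$; if $\mu(W)<0$ then $h^0(W)=0$; the delicate slope-$0$ subcase is handled by noting that a globally generated bundle has non-negative degree via a nonzero section of its determinant, which forces the globally generated part of a slope-$0$ semistable bundle to be trivial of rank $=h^0$). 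Applying this to $\bar E$, whose slopes are all $\le\mu\le 1$ since $\bar E$ is a subsheaf of the semistable $E$, gives $r\mu=h^0(\bar E)\le\rk\bar E$; as $\rk\bar E$ equals the generic rank of $\cE'$, this is exactly the desired bound, and at $\mu=1$ it already forces $\bar E=E$. The one step needing care is precisely this slope-$\le 1$ inequality together with the identification $h^0(\bar E)=h^0(E)$, which is where the semistability of $E$ and the $IT^0$ hypothesis are really used.
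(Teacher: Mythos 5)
Your proof is correct, and it reaches the key estimate by a genuinely different mechanism from the paper's, even though the skeleton (identify the restriction map with evaluation of $\cE$ at a general point $t$ via generic base change, then split $\cE=\bigoplus_i\cE_i$ into indecomposables) is necessarily the same. The paper's core step is a direct section count: from $0\to\cE_i\otimes\cO_C(-t)\to\cE_i\to\cE_i\otimes k(t)\to 0$, the image of evaluation has dimension $h^0(\cE_i)-h^0(\cE_i\otimes\cO_C(-t))$, and Atiyah's classification gives $h^0(\cE_i\otimes\cO_C(-t))=\max\{0,d_i-r_i\}$ for general $t$; the only delicate case is $d_i=r_i$, where $\cE_i\otimes\cO_C(-t)\cong U_{r_i}\otimes P$ is a twist of a unipotent bundle and the paper simply moves the point $t$ to ensure $P\neq\cO_C$ and hence kill all sections. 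Combined with $h^0(\cE_i)=d_i$ (from $IT^0$), this yields image dimension $\min\{d_i,r_i\}=r_i\min\{\mu(\cE_i),1\}$, and one sums. You instead bound the generic rank of the subsheaf generated by global sections: global generation when $\mu>1$ via $h^1(E(-P))=0$, and, when $\mu\le 1$, the inequality $h^0\le\rk$ for bundles whose Harder--Narasimhan slopes are all $\le 1$, applied to the saturation $\bar E$ and combined with $h^0(E)=\deg E$ from semistability. What the paper's route buys is brevity (one exact sequence plus Atiyah), at the price of the ad hoc ``move $t$'' trick in the slope-one case; what your route buys is a $t$-independent, intrinsic statement about the globally generated subsheaf, a cleaner separation of hypotheses ($IT^0$ enters only to force all slopes positive, after which pure semistability arguments take over), and an argument that would survive without indecomposability. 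The cost is a longer chain of standard facts, notably the slope-zero case of your $h^0\le\rk$ estimate (a globally generated degree-zero bundle on an elliptic curve is trivial), which you correctly flag as the delicate point; it is true and standard, so there is no gap.
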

\begin{proof} Take the decomposition of
$\cE = \oplus \cE_i$ into indecomposable bundles. For
each $i$, there is an induced exact sequence
$$ 0 \to \cE_i \otimes \cO_C(-t) \to \cE_i \to \cE_i \otimes k(t) \to
0.$$
  Let $d_i =\deg( \cE_i)$ and $r_i=\text{rk} (\cE_i)$, then $\cE_i \otimes \cO_C(-t)$ has rank $r_i$ and degree $d_i-r_i$.
If $d_i=r_i$, then $\cE_i \otimes \cO_C(-t)$ is a indecomposable
rank $r_i$ vector bundle of degree $0$. Hence $\cE_i \otimes
\cO_C(-t) \cong U_{r_i} \otimes P$ for some $P \in \Pic^0(C)$ and
$U_{r_i}$ is a unipotent vector bundle (cf. \cite{At}). Whenever
$P= \cO$, we pick $t' \ne t$ and consider  $\cE_i \otimes
\cO_C(-t')\cong  U_{r_1} \otimes \cO(-t'+t)$ instead so that it
has no global section. Hence we may and do assume that $H^0(\cE_i
\otimes \cO_C(-t)) =0$ for general $t \in C$ if $d_i=r_i$.

 It  now follows that $h^0(\cE_i \otimes \cO_C(-t))= \max\{0, d_i-r_i\}$ for general $t$. Hence the image of $H^0(\cE_i) \to H^0(\cE_i \otimes k(t))$ has dimension $d_i$ (resp. $r_i$) if $d_i < r_i$ (resp. $d_i \ge r_i$). The statement now follows by simply taking the sum.
\end{proof}

Let $V_{m,i}$ ($i=0,1$) be the image of the following map
$$H^0( C, E_{m,i})) \hookrightarrow
H^0(C,E_m) \stackrel{res}{\to} H^0(F_t, \cO(mK)|_{F_t})$$
for a general point $t\in C$.
Then we have $ \dim V_{6,0} \ge 12$ and $\dim V_{6,1} \ge 1$ by
Lemma \ref{im}.
\medskip

{\bf Claim}. {\em The subsystem given by the vector space
$$V_{6,0}+V_{6,1} \subset H^0(G_t, \cO(6))$$ gives a birational
map on $G_t$ for all general $t\in C$.}
\medskip

We consider the local sections explicitly. Let $x,y,z, u$ be all
the 4 coordinates of $\bP(1,1,2,5)$ with weights $1,1,2,5$. Then
$E_{m,0} \otimes k(t)$ is generated by sections in $\{x^iy^jz^k|
i+j+2k=m\}$ and $E_{m,1} \otimes k(t)$ is generated by sections in
$\{x^iy^jz^ku| i+j+2k=m-5\}$. In a word, either $xu$ or $yu$
extends to global sections in $H^0(X,6K_X)$. Furthermore, at least
12 linearly independent sections in $E_{m,0} \otimes k(t)$ can be extended
to global sections in $H^0(X,6K_X)$.

 To prove the claim, we put $H=H_t$ and let $\Sigma_0 \subset H^0(H, \cO_H(6))$
(resp. $\Sigma_1\subset H^0(H, \cO_H(6))$) be the subspace spanned
by $\{x^6,\cdots,y^6\}$ (resp. by $\{ x^4z,x^3yz,\cdots,y^4z\}$).
We see that $\dim\Sigma_0=7$ and $\dim\Sigma_1=5$. By dimensional
considerations, one has $\dim V_{6,0} \cap \Sigma_0 \ge 3$ and
$\dim V_{6,0} \cap \Sigma_1 \ge 1$. Pick linearly independent
elements $\sigma_{0,1},  \sigma_{0,2}, \sigma_{0,3} \in
V_{6,0} \cap \Sigma_0$ and $ z \sigma_{1} \in V_{6,0} \cap
\Sigma_1$. We consider the map $\tilde{\varphi}: H \dashrightarrow
\bP^3$ defined by these $4$ sections. It is easy to see that
$\tilde{\varphi}$ has image of dimension $2$. Indeed, consider the
map $\varphi: H \dashrightarrow \bP^{11}$ given by $V_{6,0}$ with
image $H'$. Since $\varphi$ factors through $\tilde{\varphi}$, one
sees that $H'$ is a surface and clearly $\deg(H') \ge 10$. Since
$$\deg(\varphi) \cdot \deg(H')\leq (\cO_H(6)\cdot \cO_H(6))_H=18,$$ it follows that $\varphi$ has degree $1$, hence is
birational.

Since $G_t \cong X_{10} \to \bP(1,1,2) \cong H$ is a $2:1$ map and
$u$ can separate points on general fibers of this double covering.
Hence the sections in $V_{6,1}$ separate points on general
fibers of this double covering.

The Claim now follows and hence  this completes the proof of
Theorem \ref{6K}.\qed

\begin{exmp} Suppose that there exists a minimal irregular threefold $X$ with a fibration $f:X \to C$ fibered by $(1,2)$ surfaces.
Suppose that $K_X^3=\frac{1}{2}$ and $B(X)=\{ 3 \times (1,2)\}$. By Reid's R-R formula, one has $P_2(X)=1$, $P_3(X)=2$, $P_4(X)=5$, $P_5(X)=9$ and $ P_6(X)=16$. We show that $|5K_X|$ may be non-birational.

Note that $\text{rk}(E_{5,0})=12$ and $\text{rk}(E_{5,1})=1$. Assume $h^0(E_{5,0})=8$ and $h^0(E_{5,1})=1$.

Now $ H^0(F_t, 5K|_{F_t})$ is generated by $$ \{ x^5,...,y^5,x^3z,x^2yz,xy^2z,y^3z,
xz^2, yz^2, u\}.$$  If $V_{5,1}$ is generated by $\{x^5,...,y^5, xz^2, yz^2\}$ and $V_{5,2}$ is generated by $u$, then these sections can not
distinguish points $(x_0,y_0,z_0, u_0)$ from $(x_0,y_0,-z_0, u_0)$. In other
words, it may only give a $2:1$ map on $F_t$ instead of
a birational map.

However, we do not know whether this kind of examples really exists or not.
\end{exmp}


\end{document}